\newtheorem{theorem}{Theorem}[section]
\newtheorem{lemma}{Lemma}[section]
\newtheorem{remark}{Remark}[section]
\numberwithin{equation}{section}
\title[Quantitative uniqueness of continuation]{Comments on the quantitative uniqueness of continuation for evolution equations}
\author[Mourad Choulli]{Mourad Choulli}
\address{Universit\'e de Lorraine}
\email{mourad.choulli@univ-lorraine.fr}
\begin{document}

\begin{abstract}
We establish near-optimal quantitative uniqueness of continuation for solutions of evolution equations vanishing on the lateral boundary. These results were obtained simply by combining existing observability inequalities and energy estimates.
\end{abstract}

\subjclass[2010]{35K05, 35L05, 35R25}

\keywords{Evolution equations, quantitative uniqueness of continuation, observability inequalities.}

\maketitle


\section{Introduction}\label{section1}

Throughout this text, $\Omega$ is a bounded Lipschitz domain of $\mathbb{R}^n$, $n\ge 2$, with boundary $\Gamma$. The unit normal exterior vector field on $\Gamma$ is denoted by $\nu$.  Let $\Omega_\ast$ be an open domain of $\mathbb{R}^n$ fixed arbitrarily so that $\Omega \Subset \Omega_\ast$. Denote by  $\mathbf{g}=(g_{k\ell})$ a symmetric matrix satisfying $g_{k\ell}\in C^{2,1}(\overline{\Omega}_\ast)$, $1\le k,\ell\le n$, and there exists a constant $\varkappa\ge 1$ so that
\[
\varkappa ^{-1}|\xi| ^2\le (\mathbf{g}(x)\xi|\xi) \le \varkappa |\xi|^2,\quad x\in \Omega_\ast ,\; \xi\in \mathbb{R}^n.
\]
Here and henceforth, $(\cdot |\cdot)$ denotes the Euclidian scalar product of $\mathbb{R}^n$.

Let $\Delta_{\mathbf{g}}:=\mathrm{div}(\mathbf{g}\nabla \cdot)$ and consider the usual evolution operators:
\begin{align*}
&H=\Delta _\mathbf{g}-\partial_t\quad \; \mbox{(heat operator)},
\\
&S=\Delta _\mathbf{g}+i\partial_t\quad \mbox{(Schr\"odinger operator)},
\\
&W=\Delta _\mathbf{g}-\partial_t^2\quad \mbox{(wave operator)}.
\end{align*}

Let us recall some classical uniqueness of continuation results for the operators $P$, $S$ and $W$. Let $\omega\Subset \Omega$, $\Gamma_0$ a nonempty open subset $\Gamma$, and set $Q_\omega =\omega \times (0,\mathfrak{t})$ and $\Sigma_0=\Gamma_0\times (0,\mathfrak{t})$ with $\mathfrak{t}>0$. From \cite[Theorem 5.3]{Ch} (resp. \cite[Corollary 5.1]{Ch}) if $u\in H^1((0,\mathfrak{t}),H^2(\Omega))$ satisfies $Hu=0$ and $u_{|Q_\omega}=0$ (interior data) (resp. $u_{|\Sigma_0}=\partial_\nu u_{|\Sigma_0}=0$) (Cauchy data on a part of the lateral boundary) then $u=0$. A quantitative uniqueness of continuation from a Cauchy data for $H$ with a modulus of continuity of logarithmic type was obtained by L. Bourgeois in \cite{Bo} when $\mathbf{g}$ is the identity matrix of $\mathbb{R}^n$, denoted hereinafter by $\mathbf{I}$,  $\Omega=O\setminus D$ is $C^2$ and $\Gamma_0$ is either $\partial O$ or $\partial D$. The general case was proved by the author and M. Yamamoto in \cite{CY} with a modulus of continuity of multiple logarithmic type. 

It follows from \cite[Theorem 6.4]{Ch} (resp. \cite[Corollary 6.1]{Ch}) that there exists a neighborhood $\mathcal{N}$ of $\mathbf{I}$ in $C^{2,1}(\overline{\Omega})$ so that for any $\mathbf{g}\in \mathcal{N}$ and $u\in L^2((0,\mathfrak{t}),H^2(\Omega))\cap H^1((0,\mathfrak{t}),L^2(\Omega))$ satisfying $Su=0$ and $u_{|Q_\omega}=0$ (resp. $u_{|\Sigma_0}=\partial_\nu u_{|\Sigma_0}=0$) then $u=0$ in $\Omega \times (\mathfrak{t}/4, 3\mathfrak{t}/4)$.

 For the wave equation, we know that there exists a constant $\mathfrak{t}_0>0$ only depending on $\Omega$, $\omega$, $\varkappa$ so that for any $\mathfrak{t}>\mathfrak{t}_0$ and $u\in H^2(\Omega\times (0,\mathfrak{t}))$ satisfying $Wu=0$ and $u_{|Q_\omega}=0$ then $u=0$ in $\Omega \times (\mathfrak{t}_0,\mathfrak{t}-\mathfrak{t}_0)$. This result is due to L. Robbiano \cite{Ro} when $\mathbf{g}\in C^1(\overline{\Omega},\mathbb{R}^{n\times n})$. We obtained in \cite{BC} a partial quantitative version of this uniqueness of continuation result with a modulus of continuity of multiple logarithmic type. The general case remains an open problem.
 
We show in this short note that the existing result can be improved and completed when we restrict ourselves to solution vanishing on the lateral boundary. The results we give are obtained simply by combining known observability inequalities and elementary energy estimates.

Before stating our main results, we introduce some definitions and notations. We first recall the pseudo-convexity condition we introduced in \cite{Ch}. To this end, we define
\[
\Lambda_{k\ell}^m(\mathbf{g})(x)=-\sum_{p=1}^n\partial_pg_{k\ell}(x)g_{pm}(x)+2\sum_{p=1}^ng_{kp}(x)\partial_pg_{\ell m}(x),\quad x\in \overline{\Omega},\; 1\le k,\ell,m\le n.
\]
For all $h\in C^1(\overline{\Omega})$, we consider the matrix $\Upsilon_{\mathbf{g}} (h)$ given by
\[
(\Upsilon_{\mathbf{g}} (h))_{k\ell}(x)=\sum_{m=1}^n\Lambda_{k\ell}^m(\mathbf{g})(x)\partial_m h (x),\quad x\in \overline{\Omega} 
\]
and let
\[
\mathfrak{m}_h:= \min_{\overline{\Omega}}|\nabla h|.
\]

We say that $h\in C^2(\overline{\Omega})$ is $\mathbf{g}$-pseudo-convex with parameter $\kappa>0$  if $\mathfrak{m}_h>0$ and 
\[
(\Theta_{\mathbf{g}}(h)(x)\xi|\xi)\ge \kappa |\xi|^2,\quad x\in \overline{\Omega},\; \xi \in \mathbb{R}^n.
\]
Here
\[
\Theta_{\mathbf{g}}(h)=2\mathbf{g}\nabla^2h\mathbf{g}+\Upsilon_\mathbf{g}(h),
\]
where $\nabla^2 h=(\partial_{k\ell}^2h)$.

Noting that $\Theta_{\mathbf{I}}(h)=2\nabla^2h$, we see that $h$ is $\mathbf{I}$-pseudo-convex with parameter $\kappa$  iff $\mathfrak{m}_h> 0$ and $h$ is strongly convex with parameter $\kappa/2$. We  refer to \cite[Example 2.1]{Ch} for examples in the case where $h(x)=|x-x_0|^2$, for some $x_0\in \mathbb{R}^n\setminus \overline{\Omega}$ fixed arbitrarily. 

The following notations will be used hereinafter
\[
Q_t= \Omega \times (0,t),\quad \Sigma_t=\Gamma\times (0,t),\quad t>0,
\]
$\mathfrak{t}>0$ is fixed and $Q=Q_{\mathfrak{t}}$. Define
\begin{align*}
&\mathcal{H}_0(Q)=L^2((0,\mathfrak{t}),H^2(\Omega)\cap H_0^1(\Omega))\cap H^1((0,\mathfrak{t}),H^1(\Omega)),
\\
&\mathcal{H}_1(Q)=L^2((0,\mathfrak{t}),H^2(\Omega)\cap H_0^1(\Omega))\cap H^2((0,\mathfrak{t}),H^1(\Omega)).
\end{align*}
It is worth remarking that, according to Lions-Magenes trace theorems \cite[Chapter 4, Section 2]{LM2}, $u(\cdot,0)\in H_0^1(\Omega)$ whenever $u\in \mathcal{H}_0(Q)$.

We endow $H_0^1(\Omega)$ with the norm $\|\nabla \cdot \|_{L^2(\Omega)}$ and, for any $\theta >1$, $\mathbf{e}_\theta$ will denote the constant of the interpolation inequality (e.g. \cite[Theorem 9.6]{LM1})
\begin{equation}\label{i}
\|w\|_{H_0^1(\Omega)}\le \mathbf{e}_\theta \|w\|_{H^\theta(\Omega)}^{1/\theta}\|w\|_{L^2(\Omega)}^{1-1/\theta},\quad w\in H^\theta (\Omega)\cap H_0^1(\Omega).
\end{equation}

For all $\psi \in C^1(\overline{\Omega})$, we set
\[
\Gamma^\psi =\{x\in \Gamma;\; \partial_{\nu_{\mathbf{g}}}\psi >0\},\quad \Sigma_t^\psi=\Gamma^\psi \times (0,t),
\]
where $\partial_{\nu_{\mathbf{g}}}= (\mathbf{g}\nabla \cdot |\nu)$.

As we already pointed out in \cite[Subsection 1.1]{Ch2}, if $\Omega$ is of class $C^4$ and $\Gamma_0$ is an arbitrary non-empty open subset of $\Gamma$ then we find $\psi \in C^4(\overline{\Omega})$ satisfying $\mathfrak{m}_\psi>0$ such that $\Gamma^\psi\subset \Gamma_0$.

In the rest of this text, $\mathbf{c}>0$ will denote a generic constant depending only on $\Omega$, $A$, $\mathfrak{t}$ and $\psi$. For simplicity, the non-decreasing sequence consisting of the eigenvalues of the operator $-\Delta_{\mathbf{g}}$ with domain $H_0^1(\Omega)\cap H^2(\Omega)$ will denoted by $(\lambda_j)_{j\ge 1}$.

We can now state the results that we want to establish, where $\Sigma^\psi=\Sigma_{\mathfrak{t}}^\psi$.

\begin{theorem}\label{MT1}
Assume that $\Omega$ is $C^{1,1}$ or $\Omega$ is convex. Let $0\le \psi \in C^4(\overline{\Omega})$ such that $\mathfrak{m}_\psi>0$. For any $\lambda \ge \lambda_1$ and $u\in \mathcal{H}_0(Q)$ satisfying $Hu=0$ we have
\begin{align}
&\|u\|_{L^2((0,\mathfrak{t}),H^1(\Omega))}+\|u\|_{L^\infty((0,\mathfrak{t}),L^2(\Omega))}\label{e1.1}
\\
&\hskip 4cm \le \mathbf{c}e^{\lambda \mathfrak{t}}\|\partial_\nu u\|_{L^2(\Sigma^\psi)}+\lambda^{-1/2}\|u(\cdot, 0)\|_{H_0^1(\Omega)}.\nonumber
\end{align}
Furthermore, if  $u(\cdot ,0)\in H^\theta(\Omega)$, for some $\theta >1$, then
\begin{align}
&\|\partial_tu\|_{L^2(Q)}+\|\Delta u\|_{L^2(Q)} \label{e1.2}
\\
&\hskip .5cm \le \mathbf{c}\mathbf{e}_\theta\|u(\cdot,0)\|_{H^\theta(\Omega)}^{1/\theta}\left( e^{\lambda \mathfrak{t}} \|\partial_\nu u\|_{L^2(\Sigma^\psi)}+\lambda^{-1/2}\|u(\cdot, 0)\|_{H_0^1(\Omega)}\right)^{1-1/\theta}.\nonumber
\end{align}
\end{theorem}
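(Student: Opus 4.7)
The plan is to combine a boundary observability inequality for the heat operator $H$ (available from the Carleman machinery of \cite{Ch}, with observation on $\Gamma^\psi$) with a spectral high/low-frequency split of the initial datum and two standard energy identities. Using the $L^2(\Omega)$ orthonormal basis $(\phi_j)$ associated to $(\lambda_j)$, I write $u(x,t)=\sum_j c_j e^{-\lambda_j t}\phi_j(x)$ with $c_j=(u(\cdot,0)|\phi_j)_{L^2(\Omega)}$, which is legitimate because $u(\cdot,0)\in H_0^1(\Omega)$ by Lions--Magenes.

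The starting point is the final-time boundary observability
$\|u(\cdot,\mathfrak t)\|_{L^2(\Omega)}^2\le \mathbf c\,\|\partial_\nu u\|_{L^2(\Sigma^\psi)}^2$. Splitting $\|u(\cdot,0)\|_{L^2(\Omega)}^2=\sum_{\lambda_j\le\lambda}c_j^2+\sum_{\lambda_j>\lambda}c_j^2$: since $e^{-2\lambda_j\mathfrak t}\ge e^{-2\lambda\mathfrak t}$ on the low-frequency block, one gets $\sum_{\lambda_j\le\lambda}c_j^2\le e^{2\lambda\mathfrak t}\|u(\cdot,\mathfrak t)\|_{L^2(\Omega)}^2$; on the high-frequency block the identity $\|u(\cdot,0)\|_{H_0^1(\Omega)}^2\asymp\sum_j\lambda_j c_j^2$ gives $\sum_{\lambda_j>\lambda}c_j^2\le\mathbf c\,\lambda^{-1}\|u(\cdot,0)\|_{H_0^1(\Omega)}^2$. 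Combining the two, taking square roots, and plugging in the observability yields
\[
\|u(\cdot,0)\|_{L^2(\Omega)}\le \mathbf c\bigl(e^{\lambda\mathfrak t}\|\partial_\nu u\|_{L^2(\Sigma^\psi)}+\lambda^{-1/2}\|u(\cdot,0)\|_{H_0^1(\Omega)}\bigr).
\]
The $L^2$ energy identity $\tfrac{d}{dt}\tfrac12\|u\|_{L^2(\Omega)}^2=-\int_\Omega(\mathbf g\nabla u|\nabla u)\,dx$ then gives both $\|u\|_{L^\infty((0,\mathfrak t),L^2(\Omega))}=\|u(\cdot,0)\|_{L^2(\Omega)}$ (by monotonicity) and, after integration in $t$, $\|u\|_{L^2((0,\mathfrak t),H^1(\Omega))}\le\mathbf c\,\|u(\cdot,0)\|_{L^2(\Omega)}$; these complete \eqref{e1.1}.

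For \eqref{e1.2}, I test $Hu=0$ against $-\Delta_{\mathbf g}u$. Since $u|_\Sigma=0$, integration by parts gives
\[
\tfrac{d}{dt}\tfrac12\!\int_\Omega(\mathbf g\nabla u|\nabla u)\,dx+\|\Delta_{\mathbf g}u\|_{L^2(\Omega)}^2=0,
\]
so $\|\partial_t u\|_{L^2(Q)}=\|\Delta_{\mathbf g}u\|_{L^2(Q)}\le\mathbf c\,\|u(\cdot,0)\|_{H_0^1(\Omega)}$, and the elliptic regularity available on $\Omega$ ($C^{1,1}$ or convex) upgrades this to $\|\Delta u\|_{L^2(Q)}\le\mathbf c\,\|u(\cdot,0)\|_{H_0^1(\Omega)}$. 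Applying \eqref{i} to $w=u(\cdot,0)$ and bounding $\|u(\cdot,0)\|_{L^2(\Omega)}\le\|u\|_{L^\infty((0,\mathfrak t),L^2(\Omega))}$ by \eqref{e1.1} then produces \eqref{e1.2}. The only delicate ingredient is the observability inequality invoked at the outset: it must apply to the anisotropic operator $H$ on a $C^{1,1}$ or convex $\Omega$, with observation set $\Gamma^\psi$ determined by the pseudo-convex weight, and at the regularity level $\mathcal H_0(Q)$; once that is quoted, the rest of the proof is elementary spectral and energy bookkeeping.
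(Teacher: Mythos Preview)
Your proposal is correct and follows essentially the same route as the paper: the paper likewise invokes the final-time observability inequality $\|u(\cdot,\mathfrak t)\|_{H_0^1(\Omega)}\le\mathbf c\,\|\partial_\nu u\|_{L^2(\Sigma^\psi)}$ (Theorem~\ref{TO1}, requiring only $\mathfrak m_\psi>0$, not $\mathbf g$-pseudo-convexity), performs the same low/high-frequency spectral split of $u(\cdot,0)$ to reach \eqref{e.19}, then appeals to the energy estimates of Lemma~\ref{L1} (your $L^2$ identity and the test against $-\Delta_{\mathbf g}u$) and the interpolation inequality \eqref{i}. The only cosmetic differences are that the paper quotes $H_0^1$ observability at time $\mathfrak t$ (you use its $L^2$ consequence, which suffices) and packages the energy identities into an appendix lemma rather than rederiving them inline.
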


\begin{remark}\label{R1}
{\rm
(1) When $\Omega$ is smooth compact connected Riemannian manifold with boundary $\Gamma$, $\Delta_{\mathbf{g}}$ is the Laplace-Beltrami operator on $\Omega$ and $\Upsilon =\omega\times (0,\mathfrak{t})$ (resp. $\Upsilon =\Gamma_0\times (0,\mathfrak{t})$, where $\omega$ (resp. $\Gamma_0$) is a nonempty open subset of $\Omega$ (resp. $\Gamma$), Theorem \ref{MT1} still valid if we replace $\|\partial_\nu u\|_{L^2(\Sigma^\psi)}$ by $\|\partial_\nu u\|_{L^2(\Upsilon)}$. This follows by using \cite[Corollaire 2 and Corollaire 4]{LR} instead of Theorem \ref{O1}.
\\
(2) Let $F\subset Q$ be a measurable set of positive measure. The following observability inequality was established in \cite[Theorem 2]{BM}
\[
\|u(\cdot ,\mathfrak{t})\|_{L^2(\Omega)}\le \mathbf{c}\|\partial_\nu u\|_{L^1(F)},\quad u\in \mathcal{H}_0(Q),\; Hu=0,
\]
where the constant $\mathbf{c}$ depends also on $F$. Again, Theorem \ref{MT1} remains valid whenever we substitute $\|\partial_\nu u\|_{L^2(\Sigma^\psi)}$ by $\|\partial_\nu u\|_{L^1(F)}$ in  \eqref{e1.1} and \eqref{e1.2}.
\\
(3) In the case where $q\in L^\infty (\Omega)$, $\mathbf{g}=\mathbf{I}$ and $\Upsilon=\omega\times (0,\mathfrak{t})$, where $\omega$ is a nonempty open subset of $\Omega$, it was proved in \cite{FZ} the following observability inequality
\[
\|u(\cdot ,\mathfrak{t})\|_{L^2(\Omega)}\le \mathbf{c}\|u\|_{L^2(\Upsilon)},\quad u\in \mathcal{H}_0(Q),\; (H+q)u=0.
\]
Let $\delta =\sup q$ if $\sup q>0$ and $\delta=0$ if $q\le 0$. In light of Lemma \ref{L1} in Appendix \ref{appendixA}, we can proceed as in the proof of Theorem \ref{MT1} in order to obtain the following result: there exist $\lambda_\ast$ depending only on $\Omega$, $\mathbf{g}$ and $q$ so that for all $u\in \mathcal{H}(Q)$ satisfying $(H+q)u=0$ and $\lambda \ge \lambda_\ast$ we have
\begin{align*}
&\|u\|_{L^2((0,\mathfrak{t}),H^1(\Omega))}+\|u\|_{L^\infty((0,\mathfrak{t}),L^2(\Omega))}
\\
&\hskip 4cm\le \mathbf{c}e^{(\delta+\lambda )\mathfrak{t}}\|u\|_{L^2(\Upsilon)}+\lambda^{-1/2}\|u(\cdot, 0)\|_{H_0^1(\Omega)}.
\end{align*}
Additionally, if $u(\cdot ,0)\in H^\theta(\Omega)$, for some $\theta >1$, then
\begin{align*}
&\|\partial_tu\|_{L^2(Q)}+\|\Delta u\|_{L^2(Q)} 
\\
&\hskip .5cm \le \mathbf{c}\mathbf{e}_\theta\|u(\cdot,0)\|_{H^\theta(\Omega)}^{1/\theta}\left( e^{(\delta+\lambda )\mathfrak{t}}\|u\|_{L^2(\Upsilon)}+\lambda^{-1/2}\|u(\cdot, 0)\|_{H_0^1(\Omega)}\right)^{1-1/\theta}.
\end{align*}
In the above two inequalities the constant $\mathbf{c}$ depends also on $q$.
}
\end{remark}

\begin{theorem}\label{MT2}
Assume that $0\le \psi \in C^4(\overline{\Omega})$ is $\mathbf{g}$-pseudo-convex and $\mathfrak{m}_\psi>0$. Then for any $u\in \mathcal{H}_0(Q)$ satisfying $Su=0$ we have
\begin{equation}\label{e2}
\|u\|_{L^\infty ((0,\mathfrak{t}),H^1(\Omega ))}\le \mathbf{c} \|\partial_\nu u\|_{L^2(\Sigma^\psi)}.
\end{equation}
\end{theorem}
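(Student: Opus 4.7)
\textbf{Proof plan for Theorem \ref{MT2}.} The strategy is to combine a Carleman-based boundary observability inequality at the $H^1$-level for the Schr\"odinger operator with the conservation of the Dirichlet energy along the Schr\"odinger flow. Unlike the heat case in Theorem \ref{MT1}, no spectral truncation parameter $\lambda$ enters here, because the Schr\"odinger semigroup preserves $L^2$ and $H_0^1$ norms.

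The first and main step is the observability inequality
\begin{equation}\label{plan-obs-S}
\|u(\cdot,0)\|_{H_0^1(\Omega)}\le \mathbf{c}\,\|\partial_\nu u\|_{L^2(\Sigma^\psi)},\qquad u\in \mathcal{H}_0(Q),\ Su=0.
\end{equation}
One would obtain \eqref{plan-obs-S} from a global Carleman estimate for $S$ with weight $e^{\tau\psi}$. The hypotheses $\psi\ge 0$, $\mathfrak{m}_\psi>0$ and the $\mathbf{g}$-pseudo-convexity $(\Theta_{\mathbf{g}}(\psi)\xi\,|\,\xi)\ge \kappa|\xi|^2$ are exactly what is needed to prove that Carleman estimate in the spirit of \cite{Ch}, and then to absorb the interior terms while retaining only the boundary contribution supported on $\Gamma^\psi=\{\partial_{\nu_{\mathbf{g}}}\psi>0\}$ (on $\Gamma\setminus\Gamma^\psi$ the boundary term has the favourable sign and is discarded). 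If the Carleman estimate only yields observability at the $L^2$-level, one still recovers \eqref{plan-obs-S} by applying it to $v:=\partial_t u$, which is again a solution of $Sv=0$ vanishing on $\Sigma$, and then using $\partial_t u=-i\Delta_{\mathbf{g}}u$ together with elliptic regularity for the Dirichlet Laplacian to translate a bound on $\|v(\cdot,0)\|_{L^2(\Omega)}$ into one on $\|u(\cdot,0)\|_{H^2(\Omega)\cap H_0^1(\Omega)}$.

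The second step is the conservation of the Dirichlet energy under $S$. Multiplying $\Delta_{\mathbf{g}}u+i\partial_t u=0$ by $\partial_t\bar u$, integrating over $\Omega$, integrating by parts in space using $u=0$ on $\Gamma$, and taking real parts yields
\[
\frac{d}{dt}\int_\Omega (\mathbf{g}\nabla u\,|\,\nabla \bar u)\,dx=0,
\]
so, using the two-sided bound $\varkappa^{-1}|\xi|^2\le (\mathbf{g}\xi|\xi)\le \varkappa|\xi|^2$, the quantity $\|u(\cdot,t)\|_{H_0^1(\Omega)}^2$ is constant on $(0,\mathfrak{t})$.

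Combining the two steps gives $\|u(\cdot,t)\|_{H_0^1(\Omega)}=\|u(\cdot,0)\|_{H_0^1(\Omega)}\le \mathbf{c}\,\|\partial_\nu u\|_{L^2(\Sigma^\psi)}$ for every $t\in(0,\mathfrak{t})$, and taking the supremum in $t$ yields \eqref{e2}. I expect the main obstacle to lie in the first step: ensuring that the Carleman estimate under the $\mathbf{g}$-pseudo-convexity of $\psi$ genuinely delivers an $H_0^1$-level boundary observability of the initial datum (or, equivalently, carrying out the $v=\partial_t u$ trick cleanly), since the Schr\"odinger energy conservation of the second step then automatically propagates that bound to every time slice.
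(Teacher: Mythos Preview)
Your proposal is correct and follows essentially the same route as the paper: the observability inequality you call \eqref{plan-obs-S} is quoted in the paper as Theorem~\ref{TO2} (i.e.\ \eqref{O2}, from \cite[Theorem 6.6]{Ch}), and it is then combined with the energy identities \eqref{e9} and \eqref{e8} of Lemma~\ref{L2} (case $q=0$), which amount exactly to your second step. One cosmetic remark: the quantity that is exactly conserved is $\|\sqrt{\mathbf{g}}\,\nabla u(\cdot,t)\|_{L^2(\Omega)}$ rather than $\|u(\cdot,t)\|_{H_0^1(\Omega)}$ itself, but the two are equivalent via the ellipticity bounds on $\mathbf{g}$, so the conclusion is unaffected.
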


\begin{theorem}\label{MT3}
Suppose that $0\le \psi \in C^4(\overline{\Omega})$ is $\mathbf{g}$-pseudo-convex and $\mathfrak{m}_\psi>0$. Let $p,q\in L^\infty(\Omega)$ be nonnegative. There exists $\mathfrak{t}_0$ depending only on $\Omega$, $\mathbf{g}$, $\mathfrak{t}$, $p$, $q$ and $\psi$ so that  for any $\mathfrak{t}\ge \mathfrak{t}_0$ and $u\in \mathcal{H}_1(Q)$ satisfying $(W+p\partial_t+q)u=0$ we have
\begin{equation}\label{e3}
\|u\|_{L^\infty ((0,\mathfrak{t}),H^1(\Omega ))}\le \mathbf{c} \|\partial_\nu u\|_{L^2(\Sigma^\psi)}.
\end{equation}
Here the constant $\mathbf{c}$ depends also on $p$ and $q$.
\end{theorem}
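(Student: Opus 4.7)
The plan is to mimic the template behind Theorems \ref{MT1} and \ref{MT2}: couple a boundary observability inequality, this time for the damped wave operator, with an elementary energy estimate, so that the boundary information at $\Sigma^\psi$ propagates to a uniform control of the $H^1$-norm of $u$ in time.

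The first step is to invoke the boundary observability inequality
\[
\|u(\cdot,0)\|_{H_0^1(\Omega)}^2+\|\partial_t u(\cdot,0)\|_{L^2(\Omega)}^2\le \mathbf{c}\|\partial_\nu u\|_{L^2(\Sigma^\psi)}^2,
\]
valid for any $u\in \mathcal{H}_1(Q)$ solving $(W+p\partial_t+q)u=0$ with $u|_\Sigma=0$, provided $\mathfrak{t}\ge \mathfrak{t}_0$ for some geometric time $\mathfrak{t}_0=\mathfrak{t}_0(\Omega,\mathbf{g},p,q,\psi)$. This is the natural hyperbolic analogue of the boundary-observability ingredients underlying Theorems \ref{MT1}--\ref{MT2}. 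It is deduced from a global Carleman estimate for $W+p\partial_t+q$ with a weight built from the pseudo-convex function $\psi$, in the variable-coefficient framework developed in \cite{Ch}, together with a trace estimate localizing the boundary norm to $\Gamma^\psi$.

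The second step is a forward energy estimate. Setting
\[
E(t):=\tfrac12\int_\Omega\bigl(|\partial_t u|^2+(\mathbf{g}\nabla u|\nabla u)\bigr)(x,t)\,dx,
\]
multiplying $(W+p\partial_t+q)u=0$ by $\partial_t u$, integrating by parts over $\Omega$, and using $u|_\Sigma=0$ yield
\[
E'(t)=\int_\Omega\bigl(p|\partial_t u|^2+q\,u\,\partial_t u\bigr)dx.
\]
Cauchy--Schwarz, Poincar\'e, and the uniform ellipticity of $\mathbf{g}$ give $|E'(t)|\le C\,E(t)$ with $C=C(\Omega,\mathbf{g},\|p\|_\infty,\|q\|_\infty)$, so Gr\"onwall's lemma furnishes $E(t)\le e^{C\mathfrak{t}}E(0)$ on $(0,\mathfrak{t})$. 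Combining the two steps, one gets $\|u(\cdot,t)\|_{H_0^1(\Omega)}^2\le 2\varkappa\,E(t)\le 2\varkappa\,e^{C\mathfrak{t}}E(0)\le \mathbf{c}\|\partial_\nu u\|_{L^2(\Sigma^\psi)}^2$ uniformly in $t\in(0,\mathfrak{t})$, which is \eqref{e3} after absorbing the $t$-dependent constants into $\mathbf{c}$.

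The main obstacle is the observability step. For the pure wave operator ($p\equiv q\equiv 0$) it is the classical boundary observability obtained from a Carleman inequality with pseudo-convex weight, valid once $\mathfrak{t}$ is larger than a geometric time. Accommodating the first-order term $p\partial_t u$ is delicate because it has the same order as the principal energy flux and cannot be absorbed by an elementary perturbation argument without enlarging $\mathfrak{t}_0$. The sign hypotheses $p,q\ge 0$ are used precisely to preserve the positivity of the damping and potential contributions in the Carleman identity, so that the lower-order terms can be absorbed and the boundary observability recovered for $\mathfrak{t}_0$ large enough. The remainder of the argument is purely the energy computation above, which is routine.
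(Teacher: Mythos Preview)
Your proof follows the same two-step template as the paper's: combine the boundary observability inequality for $W+p\partial_t+q$ (the paper's Theorem~\ref{TO3}, taken from \cite[Theorem~3.10]{Ch}) with a forward energy estimate to propagate control of the initial data to all of $(0,\mathfrak{t})$. The only substantive difference is in the energy step: you run Gr\"onwall's lemma to obtain $E(t)\le e^{C\mathfrak{t}}E(0)$, which works for arbitrary $p,q\in L^\infty$, whereas the paper invokes Lemma~\ref{L3}, which exploits sign conditions on $p,q$ to show that the augmented energy $\|\partial_t u\|^2+\|\sqrt{\mathbf g}\,\nabla u\|^2+\|\sqrt{-q}\,u\|^2$ is non-increasing (hence no exponential loss in $\mathfrak{t}$). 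Consequently your final paragraph misidentifies where the sign hypotheses are used: in the paper's argument the observability inequality (Theorem~\ref{TO3}) carries no sign restriction on $p,q$; the signs enter only in the energy lemma---a step that your Gr\"onwall argument renders unnecessary anyway.
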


\begin{remark}\label{R2}
{\rm
Let $\Gamma_0$ a nonempty open subset of $\Gamma$ and $\Upsilon =\Gamma_0\times (0,\mathfrak{t})$. 
\\
(1) Assume that $\Omega $, $g_{k\ell}$, $1\le k,\ell$, $p$ and $q$ are $C^\infty$. Under the assumption that $\Upsilon$ satisfies the geometric assumptions of \cite[Theorem 3.3]{BLR} or \cite[Theorem 3.4]{BLR}, it follows from \cite[Theorem 3.8]{BLR} that for each $u\in \mathcal{H}_1(Q)$ satisfying $(W+p\partial_t+q)u=0$ we have
\[
\|u\|_{H^1(Q)}\le \mathbf{c} \|\partial_\nu u\|_{L^2(\Upsilon)},
\]
where the constant $\mathbf{c}$ depends also on $p$ and $q$.
\\
(2) (Transmutation method) In the case $\mathbf{g}=\mathbf{I}$, it is shown in \cite[Section 7.5]{TW} that if there exists $\mathfrak{t}>0$ so that the following observability inequality holds
\[
\|u(\cdot ,0)\|_{H_0^1(\Omega)}+\|\partial_tu(\cdot,0)\|_{L^2(\Omega)}\le \mathbf{c}\|\partial_\nu u\|_{L^2(\Upsilon)},\quad u\in \mathcal{H}_1(Q),\; Wu=0
\]
then for all $\mathfrak{t}>0$ the following observability inequality is satisfied
\[
\|u(\cdot ,0)\|_{H_0^1(\Omega)}\le \mathbf{c}\|\partial_\nu u\|_{L^2(\Upsilon)},\quad u\in \mathcal{H}_0(Q),\; Su=0.
\]
(3) In the case $\mathbf{g}=\mathbf{I}$ and $\psi (x)=|x-x_0|^2$, for some fixed $x_0\in \mathbb{R}^n \setminus\overline{\Omega}$, as we already mentioned $\psi$ is $\mathbf{I}$-pseudo-convex. Since $\Gamma^\psi =\{x\in \Gamma;\; (x-x_0|\nu (x))>0\}$, Theorem \ref{TO3} contains the classical observability inequality obtained by the multiplier method (but with less precise constants) (e.g \cite[Theorem 7.2.4]{TW}).
\\
(4) Let $q\in W^{2,\infty}(\Omega)$, $\Gamma_0$ be an arbitrary nonempty open subset of $\Gamma$, $\mathfrak{t}>0$ and $\Upsilon=\Gamma_0\times (0,\mathfrak{t})$. As a particular case of \cite[Theorem 6.5]{LL} we have
\begin{equation}\label{O4}
\|u(\cdot ,0)\|_{L^2(\Omega)}\le e^{\mathfrak{b}\lambda}\|\partial_\nu u\|_{L^2(\Upsilon)}+\lambda^{-1}\|u(\cdot,0)\|_{H^2(\Omega)},\quad \lambda \ge \lambda_0,
\end{equation}
for all $u\in \mathcal{H}_0(Q)$ satisfying $u(\cdot ,0)\in H^2(\Omega)$ and $(S+q)u=0$, where the constants $\lambda_0$ and $\mathfrak{b}$ only depend on $\Omega$, $A$, $\Gamma_0$ and $\mathfrak{t}$.

We can combine \eqref{e9} in Appendix \ref{appendixA} and \eqref{O4} in order to get in the case $q\le 0$ the following quantitative uniqueness of continuation inequality
\[
\|u\|_{L^\infty( (0,\mathfrak{t}),L^2(\Omega))}\le e^{\mathfrak{b}\lambda}\|\partial_\nu u\|_{L^2(\Upsilon)}+\lambda^{-1}\|u(\cdot,0)\|_{H^2(\Omega)},\quad \lambda \ge \lambda_0,
\]
for all $u\in \mathcal{H}_0(Q)$ satisfying $u(\cdot ,0)\in H^2(\Omega)$ and $(S+q)u=0$, where $\lambda_0$ and $\mathfrak{b}$ are as above.

A similar approximate observability inequality to \eqref{O4} holds for the operator $W+p\partial_t+q$ (\cite[Theorem 6.1]{LL}).
}
\end{remark}

The results we just stated will be proved in the next section. The energy estimates we used are more or less known. For completeness, we added an appendix devoted to the proof of these energy  estimates.

For clarity, we limited our results to some standard evolution equations. However many other results can be derived from the huge literature on observability inequalities, including the magnetic Schr\"odinger and wave equations.

\section{Proof of Theorems \ref{MT1}, \ref{MT2} and \ref{MT3}}

 The proof of Theorem \ref{MT1} is based of the following final-time observability inequality.
 
 \begin{theorem}\label{TO1}
 {\rm (}\cite[Theorem 5.4]{Ch} and \cite{Ch2}{\rm )} Suppose that $\Omega$ is of class $C^1$ or $\Omega$ is convex. Let $\psi \in C^4(\overline{\Omega})$ such that $\mathfrak{m}_\psi>0$. Then for each $u\in \mathcal{H}_0(Q)$ satisfying $Hu=0$ we have
 \begin{equation}\label{O1}
 \|u(\cdot ,\mathfrak{t})\|_{H_0^1(\Omega)}\le \mathbf{c} \|\partial_\nu u\|_{L^2(\Sigma^\psi)}.
 \end{equation}
 \end{theorem}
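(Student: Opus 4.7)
The natural route is a global Carleman estimate for the heat operator $H=\Delta_{\mathbf{g}}-\partial_t$ with a weight built from $\psi$, combined with the dissipation of the Dirichlet heat semigroup on $H_0^1(\Omega)$. The hypothesis $\mathfrak{m}_\psi>0$ (no critical point of $\psi$ in $\overline{\Omega}$) is precisely what makes the Carleman estimate for a variable-coefficient parabolic operator go through, while the regularity of $\Omega$ ($C^{1,1}$ or convex) is used to transfer the estimate from smooth test functions to the energy class $\mathcal{H}_0(Q)$ via $H^2$ boundary regularity.

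\textbf{Carleman estimate with boundary term on $\Sigma^\psi$.} I would work with the Fursikov--Imanuvilov weight
\[
\varphi(x,t)=\frac{e^{\mu\psi(x)}-e^{2\mu\|\psi\|_\infty}}{t(\mathfrak{t}-t)},\qquad \alpha(x,t)=\frac{e^{\mu\psi(x)}}{t(\mathfrak{t}-t)},
\]
for $\mu>0$ large. Conjugating by $e^{s\varphi}$, splitting $e^{s\varphi}H(e^{-s\varphi}\,\cdot\,)$ into symmetric and skew-symmetric parts and integrating by parts twice yields, for $s$ sufficiently large and smooth $v$ with $v|_\Sigma=0$,
\begin{align*}
&\int_Q e^{2s\varphi}\!\left(s\alpha|\nabla v|^2+s^3\alpha^3|v|^2\right)dx\,dt \\
&\qquad \le \mathbf{c}\int_Q e^{2s\varphi}|Hv|^2\,dx\,dt+\mathbf{c}\,s\int_{\Sigma^\psi}e^{2s\varphi}\alpha|\partial_\nu v|^2\,d\sigma\,dt.
\end{align*}
Only $\Sigma^\psi$ appears because the boundary integrand carries a factor proportional to $\partial_{\nu_{\mathbf{g}}}\psi$, which is non-positive on $\Gamma\setminus\Gamma^\psi$ and can be discarded.

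\textbf{From the weighted estimate to \eqref{O1}.} After a density argument in $\mathcal{H}_0(Q)$, I apply the inequality to our $u$ with $Hu=0$ and restrict to the mid-interval $[\mathfrak{t}/4,3\mathfrak{t}/4]$, where both $e^{2s\varphi}$ and $\alpha$ are bounded above and below by positive constants (and on $\Sigma^\psi$ the exponential decay of $e^{2s\varphi}$ at the endpoints dominates the blowup of $\alpha$). This produces
\[
\int_{\mathfrak{t}/4}^{3\mathfrak{t}/4}\|\nabla u(\cdot,t)\|_{L^2(\Omega)}^2\,dt\le \mathbf{c}\,\|\partial_\nu u\|_{L^2(\Sigma^\psi)}^2.
\]
Since $u|_\Sigma=0$ for all $t$, $\partial_t u$ also vanishes on $\Sigma$, whence
\[
\frac{d}{dt}\int_\Omega(\mathbf{g}\nabla u|\nabla u)\,dx=-2\int_\Omega|\partial_t u|^2\,dx\le 0.
\]
Thus $t\mapsto\int_\Omega(\mathbf{g}\nabla u|\nabla u)\,dx$ is non-increasing, so $\|u(\cdot,\mathfrak{t})\|_{H_0^1(\Omega)}^2$ is bounded (up to the ellipticity constant $\varkappa$) by its average over $[\mathfrak{t}/4,3\mathfrak{t}/4]$, and combining with the previous display closes the argument.

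\textbf{Main difficulty.} The delicate step is the Carleman estimate itself: one must track the sign and the size of every boundary contribution produced by the two integrations by parts in the variable-coefficient setting, and then justify the density passage from smooth test functions to $\mathcal{H}_0(Q)$. The $C^{1,1}$ or convex hypothesis on $\Omega$ enters here via Grisvard-type $H^2$-regularity up to the boundary, which is what legitimizes both the integrations by parts and the taking of the normal trace $\partial_\nu u$.
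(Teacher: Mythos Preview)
The paper does not give a proof of this theorem: it is quoted verbatim from the references \cite[Theorem~5.4]{Ch} and \cite{Ch2} and used as a black box in the proof of Theorem~\ref{MT1}. So there is no in-paper argument to compare your proposal against.

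That said, your outline is the standard route to such a final-time observability inequality and matches what one expects the cited reference to do: a global Carleman estimate for $H$ with a Fursikov--Imanuvilov weight built from $\psi$ (the hypothesis $\mathfrak{m}_\psi>0$ ensuring the weight has no interior critical point), the sign analysis of the boundary contribution isolating $\Sigma^\psi$ via the factor $\partial_{\nu_{\mathbf{g}}}\psi$, and finally the monotonicity of $t\mapsto\int_\Omega(\mathbf{g}\nabla u\,|\,\nabla u)\,dx$ to pass from a mid-time average of $\|\nabla u(\cdot,t)\|_{L^2(\Omega)}^2$ to its value at $t=\mathfrak{t}$. Each step you list is correct in substance; the only place one must be careful in a full write-up is the variable-coefficient Carleman computation itself and the density/trace argument needed to apply it in $\mathcal{H}_0(Q)$, for which the $C^{1,1}$-or-convex hypothesis on $\Omega$ is indeed what guarantees the requisite $H^2$ boundary regularity.
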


\begin{proof}[Proof of Theorem \ref{MT1}]
Let $\phi_j$, $j\ge 1$, be an eigenfunction for $\lambda_j$ so that $(\phi_j)_{j\ge 1}$ forms an orthonormal basis of $L^2(\Omega)$. In this case $u(\cdot,\mathfrak{t})$ is given by the series
\[
u(\cdot ,\mathfrak{t})=\sum_{j\ge 1}e^{-\lambda_j \mathfrak{t}}(u(\cdot ,0),\phi_j)\phi_j.
\]
Here and henceforth, $(\cdot ,\cdot)$ denotes the usual scalar product of $L^2(\Omega)$. In particular, we have
\begin{equation}\label{e.15}
(u(\cdot ,0),\phi_j)=e^{\lambda_j \mathfrak{t}}(u(\cdot ,\mathfrak{t}),\phi_j),\quad j\ge 1.
\end{equation}

Let $\lambda \ge \lambda_1$. In light of \eqref{e.15}, we get
\begin{equation}\label{e.16}
\sum_{\lambda_j\le \lambda}|(u(\cdot, 0),\phi_j)|^2\le e^{2\lambda \mathfrak{t}}\sum_{\lambda_j\le \lambda}|(u(\cdot, \mathfrak{t}),\phi_j)|^2\le e^{2\lambda \mathfrak{t}}\|u(\cdot, \mathfrak{t})\|_{L^2(\Omega)}^2.
\end{equation}

On the other hand, since $u(\cdot, 0)\in H_0^1(\Omega)$, we obtain
\begin{equation}\label{e.17}
\sum_{\lambda_j> \lambda}|(u(\cdot, 0),\phi_j)|^2\le \lambda^{-1}\sum_{\lambda_j> \lambda}\lambda_j|(u(\cdot, 0),\phi_j)|^2\le \lambda^{-1}\|u(\cdot, 0)\|_{H_0^1(\Omega)}^2.
\end{equation}

Putting together \eqref{e.16} and \eqref{e.17}, we find
\begin{equation}\label{e.18}
\|u(\cdot,0)\|_{L^2(\Omega)}\le e^{\lambda \mathfrak{t}}\|u(\cdot, \mathfrak{t})\|_{L^2(\Omega)}+\lambda^{-1/2}\|u(\cdot, 0)\|_{H_0^1(\Omega)}.
\end{equation}
Then \eqref{O1} in \eqref{e.18} yields
\begin{equation}\label{e.19}
\|u(\cdot,0)\|_{L^2(\Omega)}\le \mathbf{c}e^{\lambda \mathfrak{t}}\|\partial_\nu u\|_{L^2(\Sigma^\psi)}+\lambda^{-1/2}\|u(\cdot, 0)\|_{H_0^1(\Omega)}.
\end{equation}
This inequality, \eqref{e4} and \eqref{e5} in Appendix \ref{appendixA} with $q=0$ and $\delta=0$ give 
\[
\|u\|_{L^2((0,\mathfrak{t}),H^1(\Omega))}+\|u\|_{L^\infty((0,\mathfrak{t}),L^2(\Omega))}\le \mathbf{c}e^{\lambda \mathfrak{t}}\|\partial_\nu u\|_{L^2(\Sigma^\psi)}+\lambda^{-1/2}\|u(\cdot, 0)\|_{H_0^1(\Omega)}.
\]
That is \eqref{e1.1} is proved.

Next, using  the fact that $\|\partial_tu\|_{L^2(Q)}=\|\Delta u\|_{L^2(Q)}$, we obtain from \eqref{e4}, \eqref{e7} in Appendix \ref{appendixA}, again  with $q=0$ and $\delta =0$, and Poincar\'e's inequality
\[
\|\partial_tu\|_{L^2(Q)}+\|\Delta u\|_{L^2(Q)}\le \mathbf{c}\|u(\cdot ,0)\|_{H_0^1(\Omega)}.
\]
We then use the interpolation inequality \eqref{i} in order to obtain
\[
\|\partial_tu\|_{L^2(Q)}+\|\Delta u\|_{L^2(Q)}\le \mathbf{c} \mathbf{e}_\theta \|u(\cdot ,0)\|_{H^\theta(\Omega)}^{1/\theta}\|u(\cdot ,0)\|_{L^2(\Omega)}^{1-1/\theta}.
\]
This inequality and \eqref{e1.1}  give \eqref{e1.2}.
\end{proof}

Before proving Theorem \ref{MT2} we recall the following observability inequality.

\begin{theorem}\label{TO2}
{\rm (}\cite[Theorem 6.6]{Ch}{\rm )} Let $0\le \psi \in C^4(\overline{\Omega})$ be $\mathbf{g}$-pseudo-convex and satisfies $\mathfrak{m}_\psi>0$. Then for each $u\in \mathcal{H}_0(Q)$ satisfying $Su=0$ we have
\begin{equation}\label{O2}
\|u(\cdot ,0)\|_{H_0^1(\Omega)}\le \mathbf{c}\|\partial_\nu u\|_{L^2(\Sigma ^\psi)}.
\end{equation}
\end{theorem}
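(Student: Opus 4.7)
The plan is to combine the initial-time observability inequality \eqref{O2} of Theorem \ref{TO2} with conservation of the $H_0^1$-energy for the Schr\"odinger flow. Unlike the parabolic setting, no spectral-cutoff trick with a parameter $\lambda$ is needed: the Schr\"odinger semigroup is conservative in the energy norm, so the $H_0^1$ bound at $t=0$ propagates uniformly in time.

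The first step is the following energy identity: for $u\in\mathcal{H}_0(Q)$ solving $Su=0$, the map $t\mapsto\int_\Omega(\mathbf{g}\nabla u(\cdot,t)|\nabla\bar u(\cdot,t))\,dx$ is independent of $t$. To verify this, I would differentiate under the integral sign, integrate by parts using the boundary condition $u|_\Sigma=0$ (and hence $\partial_t u|_\Sigma=0$), and substitute $\Delta_\mathbf{g} u=-i\partial_t u$ from $Su=0$. The time derivative then reduces to $\mathrm{Re}\bigl(-2i\|\partial_t u(\cdot,t)\|_{L^2(\Omega)}^2\bigr)$, which vanishes. Combined with the ellipticity of $\mathbf{g}$ and Poincar\'e's inequality, this gives the energy equivalence
\[
\|u(\cdot,t)\|_{H_0^1(\Omega)}\le\mathbf{c}\|u(\cdot,0)\|_{H_0^1(\Omega)},\qquad t\in(0,\mathfrak{t}).
\]
This is an elementary energy estimate of the same type as those collected in Appendix \ref{appendixA} for the parabolic case, and can be quoted from there.

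The second step is to combine the above with Theorem \ref{TO2}: applying \eqref{O2} at the initial time, we bound
\[
\sup_{t\in(0,\mathfrak{t})}\|u(\cdot,t)\|_{H_0^1(\Omega)}\le\mathbf{c}\|u(\cdot,0)\|_{H_0^1(\Omega)}\le\mathbf{c}\|\partial_\nu u\|_{L^2(\Sigma^\psi)}.
\]
The continuous embedding $H_0^1(\Omega)\hookrightarrow H^1(\Omega)$ yields \eqref{e2}.

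I do not foresee any substantial obstacle: the Schr\"odinger case is in fact easier than the parabolic analogue in Theorem \ref{MT1}, precisely because the solution operator is unitary on $H_0^1(\Omega)$ (relative to the $\mathbf{g}$-induced inner product), so that the initial-time observability of Theorem \ref{TO2} and the energy conservation instantly close the argument. The only mild technical point is justifying the time-differentiation within the regularity class $\mathcal{H}_0(Q)$, which is standard and, as the author indicates, is handled by the appendix.
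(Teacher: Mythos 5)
Your proposal does not prove the statement in question: the statement \emph{is} the observability inequality \eqref{O2}, and the first step of your plan --- ``combine the initial-time observability inequality \eqref{O2} of Theorem \ref{TO2} with conservation of the $H_0^1$-energy'' --- takes \eqref{O2} as given. What you have written out is, almost verbatim, the paper's proof of Theorem \ref{MT2}: the author derives \eqref{e2} exactly by combining the energy identities \eqref{e9} and \eqref{e8} of Lemma \ref{L2} (with $q=0$) with \eqref{O2}. As an argument for Theorem \ref{TO2} itself, your proposal is circular, and the telltale sign is that the hypotheses of the theorem --- that $\psi$ is $\mathbf{g}$-pseudo-convex with $\mathfrak{m}_\psi>0$, which determine the observation region $\Gamma^\psi=\{x\in\Gamma:\ \partial_{\nu_\mathbf{g}}\psi>0\}$ --- are never used anywhere in your argument.

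The gap is not merely formal: the content of \eqref{O2} cannot be reached by energy estimates of the type in Appendix \ref{appendixA}. Conservation of the $L^2$ and $H_0^1$ energies relates interior norms of $u$ at different times, but for a solution with $u|_{\Sigma}=0$ it carries no information whatsoever about the boundary flux $\partial_\nu u$ on the sub-boundary $\Sigma^\psi$; an inequality bounding $\|u(\cdot,0)\|_{H_0^1(\Omega)}$ by $\|\partial_\nu u\|_{L^2(\Sigma^\psi)}$ is a genuine observability (quantitative unique continuation) statement. In this paper it is deliberately \emph{not} proved but quoted from \cite[Theorem 6.6]{Ch}, where it is established via a Carleman estimate for the Schr\"odinger operator built on the $\mathbf{g}$-pseudo-convex weight $\psi$ --- that is precisely where the pseudo-convexity condition enters. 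So the correct disposition here is either a citation, as the author gives, or a reconstruction of the Carleman-estimate argument; your proposal supplies neither, and the ``unitarity of the Schr\"odinger flow'' observation, while true and useful for Theorem \ref{MT2}, cannot close this argument.
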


\begin{proof}[Proof of Theorem \ref{MT2}]
In light of \eqref{e9} and \eqref{e8} in Appendix \ref{appendixA} with $q=0$ we obtain
\[
\|u\|_{L^\infty((0,\mathfrak{t}),H^1(\Omega))}\le \mathbf{c}\|u(\cdot ,0)\|_{H_0^1(\Omega)}.
\]
Combining this inequality and \eqref{O2}, we find \eqref{e3}.
\end{proof}

Theorem \ref{MT3} is an immediate consequence of Lemma \ref{L3} in Appendix \ref{appendixA} and the following observability inequality
\begin{theorem}\label{TO3}
{\rm (}\cite[Theorem 3.10]{Ch}{\rm )} Assume that $0\le \psi \in C^4(\overline{\Omega})$ is $\mathbf{g}$-pseudo-convex and $\mathfrak{m}_\psi>0$. Let $p,q\in L^\infty(\Omega)$. There exists $\mathfrak{t}_0>0$ depending only on $\Omega$, $\mathbf{g}$,  $p$, $q$ and $\psi$ so that  for any $u\in \mathcal{H}_1(Q)$ satisfying $(W+p\partial_t+q)u=0$ we have
\begin{equation}\label{O3}
\|u(\cdot ,0)\|_{H_0^1(\Omega )}\le \mathfrak{c} \|\partial_\nu u\|_{L^2(\Sigma^\psi)}.
\end{equation}
Here the constant $\mathbf{c}$ depends also on $p$ and $q$.
\end{theorem}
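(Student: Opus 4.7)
The plan is to prove the observability inequality \eqref{O3} via a global Carleman estimate for the wave operator combined with a standard energy argument. First, I would introduce a space--time weight of the form $\varphi(x,t)=\psi(x)-\beta(t-\mathfrak{t}/2)^2$ with $\beta>0$ chosen small enough (in terms of the pseudo-convexity parameter $\kappa$ of $\psi$ and the ellipticity constant $\varkappa$) so that $\mathbf{g}$-pseudo-convexity of $\psi$ lifts to space--time pseudo-convexity of $\varphi$ with respect to $W=\Delta_{\mathbf{g}}-\partial_t^2$, then set $\Phi=e^{\gamma \varphi}$ for large $\gamma$. The classical conjugation $v=e^{\tau \Phi}u$ and integration by parts, using $u_{|\Sigma}=0$ to discard tangential boundary derivatives and the sign of $\partial_{\nu_{\mathbf{g}}}\psi$ to localize the boundary contribution to $\Sigma^\psi$, should yield, for $\tau$ sufficiently large, a Carleman inequality of the form
\[
\tau\!\int_Q\!(|\partial_t u|^2+|\nabla u|^2)e^{2\tau \Phi}\,dxdt+\tau^3\!\int_Q\!|u|^2 e^{2\tau \Phi}\,dxdt\le C\!\int_Q\!|Wu|^2 e^{2\tau \Phi}\,dxdt+C\tau\!\int_{\Sigma^\psi}\!|\partial_\nu u|^2 e^{2\tau \Phi}\,d\sigma dt.
\]

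Next, inserting $Wu=-p\partial_t u-qu$ and using $p,q\in L^\infty(\Omega)$, the right-hand interior integral is dominated by $C(\|p\|_\infty^2+\|q\|_\infty^2)\int_Q(|\partial_t u|^2+|u|^2)e^{2\tau \Phi}\,dxdt$, which for $\tau$ large enough (independent of $u$) is absorbed into the left-hand side. Fixing $\tau$ at such a value leaves a clean estimate in which weighted $H^1$ and $L^2$ norms of $u$ over $Q$ are controlled by $\|\partial_\nu u\|_{L^2(\Sigma^\psi)}$.

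Finally, I would close the loop with an energy argument. The wave energy $E(t)=\tfrac12\int_\Omega(|\partial_t u|^2+(\mathbf{g}\nabla u|\nabla u))\,dx$ satisfies, by the presence of the lower-order terms $p\partial_t u+qu$, a Gronwall-type bound $E(s)\le e^{C|s-t|}E(t)$. On a subinterval centered at $t=\mathfrak{t}/2$, the weight $e^{2\tau \Phi}$ is bounded below by a positive constant depending only on $\psi,\mathfrak{t}$ and the fixed $\tau$, so the Carleman estimate gives $\int_{\mathfrak{t}/4}^{3\mathfrak{t}/4}E(s)\,ds\le \mathbf{c}\|\partial_\nu u\|_{L^2(\Sigma^\psi)}^2$; the mean value theorem selects a time $s_0$ with $E(s_0)$ controlled by the observation, and the energy inequality then transfers this to $E(0)$. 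Together with Poincar\'e's inequality this delivers \eqref{O3}.

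The main obstacle is quantifying the threshold $\mathfrak{t}_0$. The space--time pseudo-convexity of $\varphi$ requires $\beta(\mathfrak{t}/2)^2$ to be strictly dominated by a quantity built out of $\min_{\overline{\Omega}}\psi$ and $\kappa$, which forces $\mathfrak{t}/2$ to exceed a geometric threshold analogous to the finite-speed-of-propagation time; and the absorption of the potentials $p,q$ imposes a compatibility between $\tau$ (which must be chosen large in terms of $\|p\|_\infty$ and $\|q\|_\infty$) and the range of $\Phi$ on $Q$, which in turn depends on $\mathfrak{t}$. Reconciling these two quantitative constraints simultaneously so as to produce a single $\mathfrak{t}_0$ depending only on $\Omega,\mathbf{g},\psi,p,q$ is the most delicate part of the argument.
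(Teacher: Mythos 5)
You should first be aware that the paper contains no proof of this statement: Theorem \ref{TO3} is quoted as is from \cite[Theorem 3.10]{Ch} and is used purely as a black box, its only role being to combine with the energy estimate of Lemma \ref{L3} to produce Theorem \ref{MT3}. So the comparison here is with the standard Carleman-estimate proof in the cited reference, and your overall plan --- conjugate with a weight built from the $\mathbf{g}$-pseudo-convex $\psi$, absorb the lower-order terms $p\partial_t u+qu$ for $\tau$ large, then recover $E(0)$ by a Gronwall/mean-value energy argument --- is indeed the right strategy, and it is precisely what the pseudo-convexity condition $\Theta_{\mathbf{g}}(\psi)\ge\kappa$ was designed for in \cite{Ch}.

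However, two steps in your sketch would fail as written, and they are exactly the steps where the hypothesis $\mathfrak{t}\ge\mathfrak{t}_0$ must enter. First, your displayed global Carleman inequality cannot hold for all $u\in\mathcal{H}_1(Q)$ with $u_{|\Sigma}=0$: such $u$ carry nonzero Cauchy data at $t=0$ and $t=\mathfrak{t}$, yet your inequality contains no temporal boundary terms; if it were valid it would yield observability for \emph{every} $\mathfrak{t}>0$, contradicting finite speed of propagation for $W$. The correct estimate holds either for functions compactly supported in time or with explicit terms at $t\in\{0,\mathfrak{t}\}$; in practice one applies it to $\chi(t)u$ with a cutoff $\chi$ and must absorb the commutator errors supported near $t=0,\mathfrak{t}$. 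Second, your quantification of $\mathfrak{t}_0$ is backwards: you assert that pseudo-convexity requires $\beta(\mathfrak{t}/2)^2$ to be \emph{dominated} by a quantity built from $\min_{\overline{\Omega}}\psi$ and that this forces $\mathfrak{t}/2$ to exceed a threshold, but an upper bound on $\beta(\mathfrak{t}/2)^2$ caps $\mathfrak{t}$ from above and cannot force it to be large. The actual mechanism is the opposite pairing: space--time pseudo-convexity of $\varphi=\psi-\beta(t-\mathfrak{t}/2)^2$ bounds $\beta$ from \emph{above} in terms of $\kappa$ and $\varkappa$ (independently of $\mathfrak{t}$), while absorbing the cutoff errors requires the weight near $t=0,\mathfrak{t}$ to lie strictly \emph{below} its values on the region where energy is recovered, i.e.\ roughly $\beta(\mathfrak{t}/2)^2>\max_{\overline{\Omega}}\psi$; the two constraints together give $\mathfrak{t}_0\sim 2\sqrt{\max_{\overline{\Omega}}\psi/\beta}$. (By contrast, the tension you fear between choosing $\tau$ large relative to $\|p\|_\infty,\|q\|_\infty$ and the range of $\Phi$ is harmless, since the final constant $\mathbf{c}$ may depend on $p$, $q$, $\psi$ and $\mathfrak{t}$.) Since you yourself flag the determination of $\mathfrak{t}_0$ as unresolved, the proposal omits precisely the part of the argument that makes the theorem true, and as stated its central Carleman inequality is false.
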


\appendix
\section{Energy estimates}\label{appendixA}

\begin{lemma}\label{L1}
Let $q\in L^\infty (\Omega )$ real valued, $\delta =\sup q$ if $\sup q>0$ and $\delta=0$ if $q\le 0$. Let $u\in \mathcal{H}_0(Q)$ such that $(H+q)u=0$. Then
\begin{equation}\label{e4}
\sqrt{2}\|\sqrt{\mathbf{g}}\, \nabla u\|_{L^2(Q)}\le e^{\delta \mathfrak{t}}\|u(\cdot ,0)\|_{L^2(\Omega)},
\end{equation}
\begin{equation}\label{e5}
\|u(\cdot ,t)\|_{L^2(\Omega)}\le e^{\delta \mathfrak{t}}\|u(\cdot ,0)\|_{L^2(\Omega)},\quad t\in [0,\mathfrak{t}]
\end{equation}
and
\begin{align}
&\|\partial_tu\|_{L^2(Q)}\le  e^{\delta \mathfrak{t}}\left(\|\sqrt{\mathbf{g}}\, \nabla u(\cdot ,0)\|_{L^2(\Omega)}+\sup \sqrt{\delta -q}\,  \|u(\cdot ,0)\|_{L^2(\Omega)}\right) \label{e7}
\\
&\hskip 9cm+\delta \|u\|_{L^2(\Omega)}.\nonumber
\end{align}
\end{lemma}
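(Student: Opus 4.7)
The plan is to derive all three inequalities from two standard test-function identities, applied either directly to $u$ or to the reweighted function $w := e^{-\delta t}u$.

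For \eqref{e5} I would multiply the equation $\partial_t u = \Delta_{\mathbf{g}} u + qu$ by $u$, integrate over $\Omega$, and use the fact that $u(\cdot,t)\in H_0^1(\Omega)$ (so there are no boundary terms when we integrate by parts in the $\Delta_{\mathbf{g}}$ term). This yields the standard identity
\[
\tfrac{1}{2}\tfrac{d}{dt}\|u(\cdot,t)\|_{L^2(\Omega)}^2+\|\sqrt{\mathbf{g}}\,\nabla u(\cdot,t)\|_{L^2(\Omega)}^2=\int_\Omega q\,u^2\,dx\le \delta\|u(\cdot,t)\|_{L^2(\Omega)}^2,
\]
and Grönwall's lemma immediately delivers \eqref{e5}. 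The regularity $u\in\mathcal{H}_0(Q)$ is enough to make this rigorous.

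For \eqref{e4} I would recycle the same identity but build in the exponential weight from the start. Setting $\phi(t):=e^{-2\delta t}\|u(\cdot,t)\|_{L^2(\Omega)}^2$, the preceding computation gives $\phi'(t)\le -2e^{-2\delta t}\|\sqrt{\mathbf{g}}\,\nabla u(\cdot,t)\|_{L^2(\Omega)}^2$. Integrating from $0$ to $\mathfrak{t}$ and using $\phi(\mathfrak{t})\ge 0$, one obtains
\[
2\int_0^{\mathfrak{t}}e^{-2\delta t}\|\sqrt{\mathbf{g}}\,\nabla u(\cdot,t)\|_{L^2(\Omega)}^2\,dt\le \|u(\cdot,0)\|_{L^2(\Omega)}^2,
\]
and bounding $e^{-2\delta t}\ge e^{-2\delta\mathfrak{t}}$ (since $\delta\ge 0$) on the left produces \eqref{e4} after taking square roots.

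The interesting piece is \eqref{e7}, where I would first reduce to the case of a nonpositive potential. Define $w:=e^{-\delta t}u$; then $w\in\mathcal{H}_0(Q)$ satisfies $(H+\tilde q)w=0$ with $\tilde q:=q-\delta\le 0$ \emph{time-independent}. Testing this against $\partial_t w$ and integrating by parts gives
\[
\|\partial_t w(\cdot,t)\|_{L^2(\Omega)}^2+\tfrac{1}{2}\tfrac{d}{dt}\|\sqrt{\mathbf{g}}\,\nabla w(\cdot,t)\|_{L^2(\Omega)}^2-\tfrac{1}{2}\tfrac{d}{dt}\int_\Omega \tilde q\,w^2\,dx=0,
\]
where the crucial point is that the $\tilde q$-term is an exact $t$-derivative because $\tilde q$ does not depend on $t$. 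Integrating from $0$ to $\mathfrak{t}$, discarding the nonnegative terms $\tfrac{1}{2}\|\sqrt{\mathbf{g}}\,\nabla w(\cdot,\mathfrak{t})\|^2$ and $-\tfrac{1}{2}\int\tilde q\, w(\cdot,\mathfrak{t})^2\,dx$ (using $\tilde q\le 0$), and recalling $w(\cdot,0)=u(\cdot,0)$, I arrive at
\[
\|\partial_t w\|_{L^2(Q)}^2\le \tfrac{1}{2}\|\sqrt{\mathbf{g}}\,\nabla u(\cdot,0)\|_{L^2(\Omega)}^2+\tfrac{1}{2}\bigl(\sup(\delta-q)\bigr)\|u(\cdot,0)\|_{L^2(\Omega)}^2.
\]
Finally I would undo the rescaling via $\partial_t u=e^{\delta t}\partial_t w+\delta u$ and apply the triangle inequality in $L^2(Q)$, bounding $e^{\delta t}\le e^{\delta\mathfrak{t}}$, which gives \eqref{e7}.

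The main obstacle is really only \eqref{e7}: the naive test $\partial_t u$ on $(H+q)u=0$ produces an awkward term $\int q\,u\,\partial_t u\,dx$ that is \emph{not} a perfect time derivative when $q$ is replaced by a variable-sign potential. The rescaling trick that absorbs $\sup q$ into the exponential and leaves behind a nonpositive, time-independent $\tilde q$ is what makes the identity close up cleanly; everything else is routine Grönwall/energy bookkeeping.
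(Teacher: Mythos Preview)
Your argument is correct and is essentially the paper's own proof: the paper introduces $v=e^{-\delta t}u$ from the outset and tests the resulting equation against $v$ (yielding \eqref{e4} and \eqref{e5}) and against $\partial_t v$ (yielding \eqref{e7}), which is exactly your $w$-computation, while your Gr\"onwall/weighted-$\phi$ treatment of \eqref{e4}--\eqref{e5} is just the same substitution unpacked. One small correction to your closing explanation: since $q\in L^\infty(\Omega)$ is already time-independent, $\int_\Omega q\,u\,\partial_t u\,dx=\tfrac12\tfrac{d}{dt}\int_\Omega q\,u^2\,dx$ \emph{is} an exact derivative; the genuine obstruction to testing $(H+q)u=0$ directly against $\partial_t u$ is the \emph{sign} of the final-time term $-\tfrac12\int_\Omega q\,u(\cdot,\mathfrak{t})^2\,dx$, which the shift $q\mapsto q-\delta\le 0$ renders nonnegative and hence discardable.
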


\begin{proof}
First, note that $v=e^{-\delta t}u$ satisfies
\begin{equation}\label{e6}
\Delta_{\mathbf{g}} v-\partial_tv-(\delta -q)v=0.
\end{equation}
Multiplying \eqref{e6} by $v$ and making integration by parts in order to obtain
\[
\int_{Q_t}|\sqrt{\mathbf{g}}\, \nabla v|^2 \, dx ds +\int_{Q_t}(\delta -q)v^2\, dx ds=\frac{1}{2}\int_\Omega v^2(\cdot,0)\; dx - \frac{1}{2}\int_\Omega v^2(\cdot,\mathfrak{t})\; dx,
\]
from which we derive \eqref{e4} and \eqref{e5}.

Since
\[
\partial_t \int_\Omega |\sqrt{\mathbf{g}}\,\nabla v(\cdot,t) |^2\, dx =2\int_\Omega \mathbf{g}\nabla v(\cdot,t)\cdot \nabla \partial_tv(\cdot,t)\, dx ,
\]
an integration by parts then gives
\begin{align*}
\partial_t \int_\Omega |\sqrt{\mathbf{g}}\, \nabla v(\cdot,t)|^2\, dx &=-2\int_Q\Delta_{\mathbf{g}} v(\cdot,t)\partial_tv(\cdot,t)\, dx 
\\
&=-2\int_\Omega (\partial_tv(\cdot ,t))^2\, dx-\partial_t\int_\Omega (\delta -q)v^2(\cdot,t)\, dx.
\end{align*}
Integrating with respect to $t$, we obtain
\begin{align*}
&\|\sqrt{\mathbf{g}}\, \nabla v(\cdot ,\mathfrak{t})\|_{L^2(\Omega)}^2-\|\sqrt{\mathbf{g}}\, \nabla v(\cdot ,0)\|_{L^2(\Omega)}^2
\\
&\hskip 2cm  =-2\|\partial_tv\|_{L^2(Q)}^2-\|\sqrt{\delta -q}\, v(\cdot ,\mathfrak{t})\|_{L^2(\Omega)}^2+\|\sqrt{\delta -q}\, v(\cdot ,0)\|_{L^2(\Omega)}^2
\end{align*}
and hence 
\[
\|\partial_tv\|_{L^2(Q)}\le  \|\sqrt{\mathbf{g}}\, \nabla v(\cdot ,0)\|_{L^2(\Omega)}+\sup \sqrt{\delta -q}\, \|v(\cdot ,0)\|_{L^2(\Omega)}.
\]
This inequality together with $\partial_tu=e^{\delta t}\partial_tv+\delta u$ give 
\[
\|\partial_tu\|_{L^2(Q)}\le  e^{\delta \mathfrak{t}}\left(\|\sqrt{\mathbf{g}}\, \nabla u(\cdot ,0)\|_{L^2(\Omega)}+\sup \sqrt{\delta -q}\,  \|u(\cdot ,0)\|_{L^2(\Omega)}\right)+\delta \|u\|_{L^2(Q)}.
\]
That is we proved \eqref{e7}.
\end{proof}

\begin{lemma}\label{L2}
Let $q\in L^\infty (\Omega)$ be non-positive. For all $u\in \mathcal{H}_0(Q)$ satisfying $(S+q)u=0$ and $t\in (0,\mathfrak{t}]$ we have
\begin{equation}\label{e9}
\|u(\cdot ,t)\|_{L^2(\Omega)}=\|u(\cdot ,0)\|_{L^2(\Omega)},
\end{equation}
and
\begin{align}
&\|\sqrt{\mathbf{g}}\, \nabla u(\cdot ,t)\|_{L^2(\Omega)}^2+\|\sqrt{-q}\, u(\cdot ,t)\|_{L^2(M)}^2 \label{e8}
\\
&\hskip 4cm =\|\sqrt{\mathbf{g}}\, \nabla u(\cdot ,0)\|_{L^2(\Omega)}^2+\|\sqrt{-q}\, u(\cdot ,0)\|_{L^2(\Omega)}^2.\nonumber
\end{align}
\end{lemma}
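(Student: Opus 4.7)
\textbf{Proof proposal for Lemma \ref{L2}.} The plan is to run the two standard Schrödinger energy identities, paying attention to the fact that $u$ is now complex-valued and that $\mathbf{g}$ and $q$ are real. Write the equation in the form
\[
i\partial_t u = -\Delta_{\mathbf{g}} u - qu,
\]
valid in $L^2(Q)$ thanks to $u\in\mathcal{H}_0(Q)$. Both identities will be obtained by pairing this against a suitable test function in $L^2(\Omega)$ for a.e. $t$, integrating by parts in space (using $u(\cdot,t)\in H_0^1(\Omega)$), and then extracting either the real or the imaginary part.

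First I would prove \eqref{e9}. Multiply the equation by $\bar u$ and integrate over $\Omega$; the spatial integration by parts gives
\[
i\int_\Omega \partial_t u\,\bar u\,dx = \int_\Omega (\mathbf{g}\nabla u|\nabla\bar u)\,dx - \int_\Omega q|u|^2\,dx.
\]
The right-hand side is real (since $\mathbf{g}$ is symmetric real and $q$ is real), so taking imaginary parts yields $\mathrm{Re}\int_\Omega \partial_t u\,\bar u\,dx = 0$, i.e. $\frac{d}{dt}\|u(\cdot,t)\|_{L^2(\Omega)}^2=0$ in $\mathcal{D}'(0,\mathfrak{t})$. Since $u\in H^1((0,\mathfrak t),H^1(\Omega))\subset C([0,\mathfrak t],L^2(\Omega))$ the function $t\mapsto\|u(\cdot,t)\|_{L^2(\Omega)}$ is continuous, and \eqref{e9} follows.

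Next I would prove \eqref{e8} by pairing the equation with $\partial_t\bar u$. The regularity $u\in L^2((0,\mathfrak t),H^2\cap H_0^1)$ and $\partial_t u\in L^2((0,\mathfrak t),H^1)$ makes the products $\Delta_{\mathbf g}u\,\partial_t\bar u$, $(\mathbf{g}\nabla u|\nabla\partial_t\bar u)$ and $qu\,\partial_t\bar u$ all integrable on $Q$, so spatial integration by parts gives
\[
i\int_\Omega |\partial_t u|^2\,dx = \int_\Omega (\mathbf{g}\nabla u|\nabla\partial_t\bar u)\,dx - \int_\Omega q\,u\,\partial_t\bar u\,dx.
\]
Taking real parts kills the left-hand side, and the symmetry of $\mathbf{g}$ together with $q$ real gives
\[
\mathrm{Re}(\mathbf{g}\nabla u|\nabla\partial_t\bar u)=\tfrac12\partial_t(\mathbf{g}\nabla u|\nabla\bar u),\qquad \mathrm{Re}(q\,u\,\partial_t\bar u)=\tfrac{q}{2}\partial_t|u|^2.
\]
Hence
\[
\tfrac12\frac{d}{dt}\!\left(\int_\Omega(\mathbf{g}\nabla u|\nabla\bar u)\,dx-\int_\Omega q|u|^2\,dx\right)=0
\]
in $\mathcal{D}'(0,\mathfrak t)$. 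Because $q\le 0$, we can rewrite this as conservation of $\|\sqrt{\mathbf{g}}\,\nabla u(\cdot,t)\|_{L^2(\Omega)}^2+\|\sqrt{-q}\,u(\cdot,t)\|_{L^2(\Omega)}^2$, and integrating from $0$ to $t$ yields \eqref{e8}.

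The only subtle point, and what I expect to be the main (though mild) obstacle, is justifying the integration by parts in time for the energy identity at the regularity level $\mathcal{H}_0(Q)$: the map $t\mapsto(\mathbf{g}\nabla u(\cdot,t)|\nabla\bar u(\cdot,t))$ is absolutely continuous with derivative $2\,\mathrm{Re}(\mathbf{g}\nabla u|\nabla\partial_t\bar u)$ precisely because $\nabla u\in H^1((0,\mathfrak t),L^2(\Omega))$, and similarly for $|u|^2$; this is the standard lemma $\frac{d}{dt}\langle v,v\rangle=2\,\mathrm{Re}\langle v,\partial_t v\rangle$ for $v\in H^1((0,\mathfrak t),H)$ applied in $H=L^2(\Omega)^n$ and $H=L^2(\Omega)$. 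If a denser regularity is wanted one may approximate the initial datum by smooth eigenfunction expansions, use the corresponding smooth solutions (for which the identities are classical) and pass to the limit in the continuous identities obtained above.
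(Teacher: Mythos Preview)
Your proposal is correct and follows essentially the same approach as the paper: both proofs obtain \eqref{e9} by pairing the equation with $\bar u$ and \eqref{e8} by pairing with $\partial_t\bar u$ (equivalently, by differentiating the Dirichlet energy in time and integrating by parts), then extracting the appropriate real/imaginary part. Your version is in fact slightly more careful than the paper's in justifying the time-differentiation at the regularity level $\mathcal{H}_0(Q)$.
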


\begin{proof}
Since
\begin{align*}
\partial_t |u|^2&=\partial_tu \overline{u}+u\partial_t \overline{u}
\\
&= i(\Delta_{\mathbf{g}} u+qu)\overline{u}-iu(\Delta_{\mathbf{g}} \overline{u}+q\, \overline{u})
\\
&= i\Delta_{\mathbf{g}} u\overline{u}-iu\Delta_{\mathbf{g}} \overline{u},
\end{align*}
a simple integration by parts give
\[
\int_\Omega (\Delta_{\mathbf{g}} u(\cdot,s)\overline{u}(\cdot,s)-u(\cdot,s)\Delta_{\mathbf{g}} \overline{u}(\cdot,s))\, dx=0,\quad s\in (0,t).
\]
Hence
\[
\int_{Q_t}\partial_t |u|^2\, dx ds =0
\]
and then
\[
\|u(\cdot ,t)\|_{L^2(\Omega)}=\|u(\cdot ,0)\|_{L^2(\Omega)}.
\]
That is we proved \eqref{e9}

On the other hand, we have
\begin{align*}
\partial_t\int_\Omega |\sqrt{\mathbf{g}}\, \nabla u|^2\, dx &=\int_\Omega \mathbf{g}\nabla \partial_tu\cdot \nabla \overline{u}\, dx +\int_\Omega \mathbf{g}\nabla u\cdot \nabla \partial_t\overline{u}\, dx 
\\
&=-\int_\Omega \partial_tu\Delta_{\mathbf{g}} \overline{u}\, dx -\int_\Omega \Delta_{\mathbf{g}} u \partial_t\overline{u}\, dx
\\
&=\int_\Omega \partial_tu(-i\partial_t\overline{u}+q\overline{u})\, dx +\int_\Omega (i\partial_t u+qu) \partial_t\overline{u}\, dx 
\\
&=\partial_t\int_\Omega q|u|^2\, dx,
\end{align*}
from which we get 
\begin{equation}\label{e.10}
\int_\Omega (|\sqrt{\mathbf{g}}\, \nabla u(\cdot,\mathfrak{t})|^2-q|u(\cdot,\mathfrak{t})|^2)\, dx=\int_\Omega(|\sqrt{\mathbf{g}}\,\nabla u(\cdot,0)|^2-q|u(\cdot,0)|^2)\, dx.
\end{equation}
We rewrite this identity  in the form
\begin{align*}
\|\sqrt{\mathbf{g}}\, \nabla u(\cdot ,\mathfrak{t})\|_{L^2(\Omega)}^2+\|\sqrt{-q}\, u(\cdot ,\mathfrak{t})&\|_{L^2(\Omega)}^2
\\
&=\|\sqrt{\mathbf{g}}\, \nabla u(\cdot ,0)\|_{L^2(\Omega)}^2+\|\sqrt{-q}\, u(\cdot ,0)\|_{L^2(\Omega)}^2.
\end{align*}
In other words, we proved \eqref{e8}.
\end{proof}

\begin{lemma}\label{L3}
Let $p,q\in L^\infty(\Omega)$ be non-positive. For all $u\in \mathcal{H}_1(Q)$ satisfying $(W+p\partial_t+q)u=0$ and $t\in (0,\mathfrak{t}]$ we have
\begin{align}
&\|\partial_tu(\cdot ,t)\|_{L^2(\Omega)}^2+\|\sqrt{\mathbf{g}}\, \nabla u(\cdot ,t)\|_{L^2(\Omega)}^2+\|\sqrt{-q}\, u(\cdot ,t)\|_{L^2(\Omega)}^2\label{e.11}
\\
&\hskip 1.5cm \le \|\partial_tu(\cdot ,0)\|_{L^2(\Omega)}^2+\|\sqrt{\mathbf{g}}\, \nabla u(\cdot ,0)\|_{L^2(\Omega)}^2+\|\sqrt{-q}u(\cdot ,0)\|_{L^2(\Omega)}^2.\nonumber
\end{align}
\end{lemma}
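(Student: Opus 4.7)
The plan is to establish \eqref{e.11} by the classical energy method for damped wave equations. Define the natural energy
\[
E(t) := \|\partial_t u(\cdot, t)\|_{L^2(\Omega)}^2 + \|\sqrt{\mathbf{g}}\, \nabla u(\cdot, t)\|_{L^2(\Omega)}^2 + \|\sqrt{-q}\, u(\cdot, t)\|_{L^2(\Omega)}^2,
\]
and show that $t\mapsto E(t)$ is non-increasing on $[0,\mathfrak{t}]$; inequality \eqref{e.11} is then immediate.

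To carry this out, I would first rewrite the equation in the form $\partial_t^2 u-\Delta_{\mathbf{g}} u = p\partial_t u + q u$, multiply by $2\partial_t u$, and integrate over $\Omega$. The term $\int_\Omega 2\partial_t u\,\partial_t^2 u\,dx$ equals $\tfrac{d}{dt}\|\partial_t u\|_{L^2(\Omega)}^2$. An integration by parts in space, whose boundary contribution vanishes because $u(\cdot,t)\in H_0^1(\Omega)$ (and hence $\partial_t u(\cdot,t)\in H_0^1(\Omega)$ by the $\mathcal{H}_1(Q)$ regularity combined with standard trace theory), converts $-\int_\Omega 2\partial_t u\,\Delta_{\mathbf{g}} u\,dx$ into $\tfrac{d}{dt}\|\sqrt{\mathbf{g}}\,\nabla u\|_{L^2(\Omega)}^2$. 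Finally, the zeroth-order term satisfies $\int_\Omega 2q u\,\partial_t u\,dx = \tfrac{d}{dt}\int_\Omega q u^2\,dx = -\tfrac{d}{dt}\|\sqrt{-q}\,u\|_{L^2(\Omega)}^2$, where the sign change uses $q\le 0$.

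Assembling the three identities, all non-dissipative contributions reorganize into $E'(t)$, and one arrives at
\[
\frac{d}{dt} E(t) = 2\int_\Omega p(\partial_t u)^2\,dx \le 0,
\]
the sign being controlled by the hypothesis $p\le 0$. Integrating on $[0,t]$ gives $E(t)\le E(0)$, which is exactly \eqref{e.11}.

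No real obstacle is expected: the argument is parallel to Lemmas \ref{L1} and \ref{L2}, and the two sign assumptions on $p$ and $q$ play the distinct roles of (i) making the damping term dissipative and (ii) turning $\int_\Omega(-q)u^2\,dx$ into a legitimate squared norm. The only technical point is to justify interchanging $\tfrac{d}{dt}$ with $\int_\Omega$ and evaluating all norms pointwise in $t$; this follows from the embedding of $\mathcal{H}_1(Q)$ into $C^1([0,\mathfrak{t}],H^1(\Omega))\cap C([0,\mathfrak{t}],H^2(\Omega)\cap H_0^1(\Omega))$, a level of regularity already used implicitly in the previous lemmas.
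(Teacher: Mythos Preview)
Your proposal is correct and follows essentially the same classical energy method as the paper: multiply the equation by $\partial_t u$, integrate by parts in space using $u(\cdot,t)\in H_0^1(\Omega)$, and observe that the damping term has a sign because $p\le 0$. The only cosmetic differences are that the paper integrates over $Q_t$ rather than differentiating $E(t)$ pointwise, and writes the computation for complex-valued $u$ (pairing the equation with $\partial_t\overline{u}$ and adding the conjugate identity); the resulting energy inequality is identical.
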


\begin{proof}
We have
\begin{align}
&\int_{Q_t}\partial_t^2u\partial_t\overline{u}\, dx ds -\int_{Q_t}\Delta_{\mathbf{g}} u\partial_t\overline{u}\, dx ds\label{e.12}
\\
&\hskip 4cm-\int_qp\partial_tu\partial_t\overline{u}\, dx ds-\int_{Q_t}qu\partial_t\overline{u}\, dx ds=0,\nonumber
\\
&\int_{Q_t}\partial_t^2\overline{u}\partial_tu\, dx ds -\int_{Q_t}\Delta_{\mathbf{g}} \overline{u}\partial_tu\, dx ds\label{e.13}
\\
&\hskip 4cm -\int_qp\partial_t\overline{u}\partial_tu\, dx ds-\int_{Q_t}q\overline{u}\partial_tu\, dx ds=0.\nonumber
\end{align}
Applying Green's formula we obtain
\begin{align}
-\int_{Q_t}\Delta_{\mathbf{g}} u\partial_t\overline{u}\, dxds-\int_{Q_t}&\Delta_{\mathbf{g}} \overline{u}\partial_tu\, dxds\label{e.14}
\\
&=\int_{Q_t}\mathbf{g}\nabla u\cdot \nabla \partial_t\overline{u}\, dxds+\int_{Q_t}\mathbf{g}\nabla \overline{u}\cdot\nabla \partial_tu\, dx ds\nonumber
\\
&=\int_{Q_t} \partial_t|\sqrt{\mathbf{g}}\, \nabla u|^2\, dxds .\nonumber
\end{align}
In light of \eqref{e.14}, taking the sum side by side of \eqref{e.12} and \eqref{e.13}, we get 
\[
\int_{Q_t}\partial_t(|\partial_tu|^2+|\sqrt{\mathbf{g}}\, \nabla u|^2-q|u|^2)\, dx ds=2\int_{Q_t}p|\partial_tu|^2\le 0.
\]
In consequence, we have
\begin{align*}
&\|\partial_tu(\cdot ,t)\|_{L^2(\Omega)}^2+\|\sqrt{\mathbf{g}}\, \nabla u(\cdot ,t)\|_{L^2(\Omega)}^2+\|\sqrt{-q}\, u(\cdot ,t)\|_{L^2(\Omega)}^2
\\
&\hskip 2cm \le \|\partial_tu(\cdot ,0)\|_{L^2(\Omega)}^2+\|\sqrt{\mathbf{g}}\, \nabla u(\cdot ,0)\|_{L^2(\Omega)}^2+\|\sqrt{-q}\, u(\cdot ,0)\|_{L^2(\Omega)}^2.
\end{align*}
That is we proved \eqref{e.11}
\end{proof}

\subsection*{Conflict of Interest statement} The author declares that there is no conflict of interest regarding the content of this work.

\subsection*{Data availability statement} There is no data associated with this work.

\end{document}